\titleformat{\section}[block]{\large\scshape\bfseries\filcenter}{\thesection.}{1em}{}		
\titleformat{\subsection}[hang]{\large\scshape\bfseries}{\thesubsection}{1em}{}			
\titleformat{\subsubsection}[hang]{\large\scshape\bfseries}{\thesubsubsection}{1em}{}			
\newcolumntype{M}[1]{>{\centering\arraybackslash}m{#1}}
\newcolumntype{N}{@{}m{0pt}@{}}
\newtheorem{lemma}{Lemma}[section]
\newtheorem{theorem}[lemma]{Theorem}
\newtheorem{prop}[lemma]{Proposition}
\newtheorem*{assumption}{Assumption}
\newtheorem{theoremalpha}{Theorem}
\theoremstyle{remark}
\newtheorem{remark}[lemma]{Remark}
\newcommand{\Span}{\operatorname{span}}
\newcommand{\conv}{\operatorname{conv}}
\newcommand{\Sym}{\operatorname{Sym}} 
\newcommand{\oO}{\operatorname{O}}
\newcommand{\inter}{\operatorname{int}}
\newcommand{\Nef}{\operatorname{Nef}}
\newcommand{\kk}{\mathbf k}
\renewcommand{\O}{\mathcal{O}}
\newcommand{\F}{\mathbb{F}}
\newcommand{\N}{\mathbb{N}}
\renewcommand{\P}{\mathbb{P}}
\newcommand{\R}{\mathbb{R}}
\newcommand{\Z}{\mathbb{Z}}
\title{The Quantitative Behavior of Asymptotic Syzygies for Hirzebruch Surfaces}
\author{Juliette Bruce}
\address{Department of Mathematics, University of Wisconsin, Madison, WI}
\email{\href{mailto:juliette.bruce@math.wisc.edu}{juliette.bruce@math.wisc.edu}}
\urladdr{\url{http://math.wisc.edu/~juliettebruce/}}
\thanks{The author was partially supported by the NSF GRFP under Grant No. DGE-1256259 and NSF grant DMS-1502553.}
\subjclass[2010]{13D02, 14M25}
\begin{document} 

\maketitle



\setcounter{section}{0}

In \cite{einErmanLazarsfeld15} Ein, Erman, and Lazarsfeld proposed a heuristic for the quantitative behavior of asymptotic syzygies: a Betti table that is ``sufficiently positive''  behaves approximately like the Betti table of a large Koszul complex. In particular, the entries of each row of such a Betti table should, after possibly rescaling, look like a normal Gaussian distribution. The goal of this note is to consider Ein, Erman, and Lazarsfeld's normality heuristic for a new class of examples, namely certain toric surfaces (including Hirzebruch surfaces) when the embedding line bundle grows in a semi-ample fashion. 

Our main results are twofold, and can be visualized in the case of $\P^{1}\times\P^{1}$ embedded by $\O_{\P^{1}\times\P^{1}}(d,2)$. Below we plot the entries of the two interesting rows of the Betti table for $d=3,5,10,$ and $20$.
\begin{figure}[H]
\includegraphics[scale=.85]{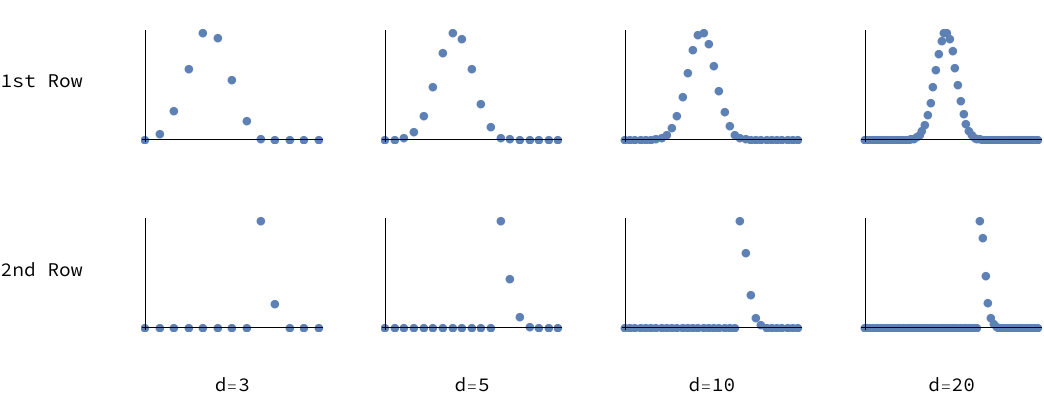}
\caption{Entries of the first row (top) and the second row (bottom) of the Betti table of $\P^1\times\P^1$.}
\label{fig:examples}
\end{figure}
In line with Ein, Erman, and Lazarsfeld's normality heuristic  Figure~\ref{fig:examples} shows that the first row of these Betti tables appears to be approaching a normal distribution. Our first result says this behavior generalizes to all Hirzebruch surfaces, which we denote by $\F_{t}$, with respect to the line bundle $\O_{\F_{t}}(d,2)=2E+dF$ where we identify $\Nef(\F_{t})\cong\N\langle E\rangle\oplus\N\langle F\rangle$ with $F$ being the fibre of the map to $\P^1$ and $E$ being the exceptional curve. Note the only previously known examples satisfying Ein, Erman, and Lazarsfeld's heuristic were smooth curves \cite{einErmanLazarsfeld15}*{Proposition~A} and random monomial ideals \cite{ermanYang18}*{Theorem~1.4}. 

Returning to Figure~\ref{fig:examples}, one sees that the second row of the Betti table of $\P^1\times\P^1$ embedding by $\O_{\P^{1}\times\P^{1}}(d,2)$ looks quite dissimilar to a normal distribution.  At best, the second row appears to converge to some fraction of a normal distribution. Our second main result shows that this failure of Ein, Erman, and Lazarsfeld's normality heuristic occurs for the second row of all Hirzebruch surfaces embedded by $\O_{\F_{t}}(d,2)$. 

As far as we are aware, this is the first known set of examples where something other than Ein, Erman, and Lazarsfeld's heuristic describes the quantitative behavior of asymptotic syzygies. We emphasize, however, that this is not a counterexample to \cite{einErmanLazarsfeld15}*{Conjecture~B} because that conjecture assumes the embedding line bundle grows in an ample fashion, and our examples are based on semi-ample growth.

To prove these theorems we build upon work of Lemmens who provided formulas for the graded Betti numbers for certain toric surfaces in terms of invariants of the associated polytopes. Our key observation is that by proving general results concerning the convergence of binomial distributions to normal distributions we can use these formulas to examine Ein, Erman, and Lazarsfeld's normality heuristic. 

Turning to the details, let $X$ be a projective variety of dimension $n$ over an arbitrary field $\kk$. Given a sequence of very ample line bundles $\{L_{d}\}_{d\in \N}$, we wish to study how the graded Betti numbers of $X$ behave asymptotically with respect to $L_{d}$ for $d\gg0$. That is we are interested in the syzygies of the section ring:
\[
R\left(X;L_d\right)\coloneqq\bigoplus_{k\in\Z}H^0\left(X, k\cdot L_d\right),
\]
as a module over $S=\Sym H^0(X, L_{d})\cong \kk[x_{0},x_{1},\ldots,x_{r_{d}}]$. Considering the graded minimal free resolution 
\[
\begin{tikzcd}[column sep = 3em]
0 & \lar R\left(X;L_{d}\right) & \lar F_{0} & \lar F_{1} & \lar \cdots & \cdots & \lar F_{r_{d}} & \lar 0\,,
\end{tikzcd}
\]
we let 
\[
K_{p,q}\left(X;L_{d}\right)\coloneqq\Span_{\kk}~\left\langle
\begin{matrix}
~\text{minimal generators of $F_p$} ~\\
\text{of degree $(p+q)$}
\end{matrix}
 \right\rangle
\]
be the finite dimensional $\kk$-vector space of minimal syzygies of homological degree $p$ and degree $(p+q)$. With this notation, $F_{p}$ is isomorphic to $\bigoplus_{q}K_{p,q}\left(X;L_{d}\right)\otimes_{\kk}S\left(-(p+q)\right)$. We write $k_{p,q}\left(X;L_{d}\right)$ for $\dim K_{p,q}\left(X;L_{d}\right)$, and then form the Betti table of $(X;L_{d})$ by placing $k_{p,q}\left(X;L_{d}\right)$ in the $(q,p)$-th spot as shown below:
\[
\begin{tabular}{c | ccc ccc}
          & 0                & 1                & $\cdots$  &  & $\cdots$ & $r_{d}$ \\ \hline
  $0$  & $k_{0,0}$   &  $k_{1,0}$  & $\cdots$  &  & $\cdots$ & $k_{r_{d},0}$ \\
  $1$  & $k_{0,1}$ & $k_{1,1}$  & $\cdots$ & & $\cdots$ & $k_{r_{d},1}$ \\
 \vdots & \vdots & \vdots &          & &            & \vdots \\
  $n$  & $k_{0,n}$ & $k_{1,n}$  & $\cdots$ & & $\cdots$ & $k_{r_{d},n}$ \\
\end{tabular}.
\]

In this set-up we may more precisely state Ein, Erman, and Lazarsfeld's heuristic as follows: if $\{L_{d}\}_{d\in\N}$ is a sequence of line bundles growing in positivity, then for any $q\in [1,n]$ there exists a function $F_{q}(d)$, depending on $X$, such that if $\{p_{d}\}_{d\in \N}$ is a sequence of non-negative integers such that
\begin{equation}\label{assumption}
\lim_{d\to\infty} \left[ p_{d}-\left(\frac{r_d}{2}+a\frac{\sqrt{r_d}}{2}\right)\right]=0,
\end{equation}
where $a\in \R$ is a fixed constant, then 
\[
F_{q}(d)\cdot k_{p_{d},q}\left(X;L_{d}\right) \to e^{-\frac{a^{2}}{2}}.
\]
Notice that the assumption that the sequence $\{p_{d}\}_{d\in\N}$ satisfies \eqref{assumption} is crucial. In particular, this change of coordinates is necessary for binomial distributions to converge to normal distributions.    As we will use it frequently, we take the time to label it here.

\begin{assumption}
We say that a sequence $\{p_{d}\}_{d\in\N}$ satisfies assumption $\bigstar$ if for some real number $a\in \R$:
\[
\lim_{d\to\infty} \left[ p_{d}-\left(\frac{r_d}{2}+a\frac{\sqrt{r_d}}{2}\right)\right]=0.
\]
\end{assumption}

We now state our main results concerning Ein, Erman, and Lazarsfeld's normality heuristic for Hirzebruch surfaces, which we denote by $\F_{t}$, embedded by the line bundle $\O_{\F_{t}}(d,2)$. 

\begin{theoremalpha}\label{thm:main1}
If $\{p_{d}\}_{d\in\N}$ is a sequence of non-negative integers satisfying $\bigstar$, then
\[
\frac{3\sqrt{2\pi}}{2^{r_{d}}\sqrt{r_{d}}}\cdot k_{p_{d},1}\left(\F_{t},\O_{\F_{t}}(d,2)\right)=e^{-\frac{a^{2}}{2}}\left(1+\oO\left(\frac{1}{\sqrt{r_{d}}}\right)\right).
\]
\end{theoremalpha}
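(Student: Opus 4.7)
The plan is to combine Lemmens' explicit combinatorial formula for the graded Betti numbers of smooth toric surfaces with a quantitative local central limit theorem for binomial coefficients. The first step is to specialize Lemmens' formula to $\F_{t}$ polarized by $dE + 2F$: the associated lattice polytope is a trapezoid of width $2$ in the fibre direction, and the formula should express $k_{p,1}(\F_{t},\O_{\F_{t}}(d,2))$ as an explicit integer linear combination of binomial coefficients $\binom{r_{d}}{p+j}$, where $j$ ranges over a bounded set of shifts, with coefficients that are polynomials in $d$, $t$, and $p$ of bounded degree. After simplification, one isolates the leading contribution, whose asymptotic magnitude is of order $2^{r_{d}}\sqrt{r_{d}}$.

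The second step is to prove a quantitative local central limit theorem of the form
\[
\binom{n}{k} \;=\; \frac{2^{n}}{\sqrt{\pi n/2}}\,e^{-(2k-n)^{2}/(2n)}\left(1+\oO\!\left(\tfrac{1}{\sqrt{n}}\right)\right),
\]
valid uniformly in the regime $|2k-n|=\oO(\sqrt{n})$. This follows from Stirling's expansion with an explicit remainder, with the cubic term in the Taylor expansion of $\log(1\pm 2y/n)$ producing an error of exactly the advertised order; in the same regime, $\binom{n}{k+j}/\binom{n}{k}=1+\oO(1/\sqrt{n})$ for any fixed $j$. Feeding this estimate back into Lemmens' formula with $n=r_{d}$ and $k=p_{d}$, and using assumption $\bigstar$ to write $(2p_{d}-r_{d})^{2}/(2r_{d})=a^{2}/2+\oo(1/\sqrt{r_{d}})$, the leading contribution evaluates to a rational multiple of $2^{r_{d}}\sqrt{r_{d}}\,e^{-a^{2}/2}$, with the remainder absorbed into the $\oO(1/\sqrt{r_{d}})$ error. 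The prefactor $3\sqrt{2\pi}/(2^{r_{d}}\sqrt{r_{d}})$ is calibrated so that the limit is exactly $e^{-a^{2}/2}$.

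The main obstacle is the first step: extracting a sufficiently precise, manageable form from Lemmens' formula and verifying that all subleading shifts and cross-terms contribute at a scale of at most $\oO(2^{r_{d}})$, rather than at the leading scale $2^{r_{d}}\sqrt{r_{d}}$. In particular, pairwise differences among shifted binomials $\binom{r_{d}}{p+j}$ are individually of order $2^{r_{d}}/\sqrt{r_{d}}$, so the cancellations among them must be tracked delicately to ensure they do not spoil the leading-order asymptotic. A secondary subtlety is uniformity in the twist $t$: although the trapezoid's width in the short direction remains $2$ for every $t$, the parameter $t$ enters the polynomial coefficients of Lemmens' formula and must be shown to influence only subleading contributions.
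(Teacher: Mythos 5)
Your plan is essentially the paper's proof: specialize Lemmens' formula for $k_{p,1}$, substitute $p_{d}=\tfrac{r_{d}}{2}+\tfrac{a\sqrt{r_{d}}}{2}$ using $\bigstar$, and evaluate the surviving binomials with a quantitative local de Moivre--Laplace estimate obtained from Stirling's formula (the paper's Proposition~3.1 and Lemma~3.2). One concrete caveat on your step one: the binomials in Lemmens' formula are not all of the form $\binom{r_{d}}{p+j}$ --- the formula actually used is
\[
k_{p,1}=\max\Bigl\{p-\tfrac{2}{3}r_{d}+E_{\delta}+1,\;0\Bigr\}\binom{r_{d}-2}{p-1}+p\binom{r_{d}}{p+1}-\Bigl(\tfrac{4}{3}r_{d}+E_{\delta}-1\Bigr)\binom{r_{d}-2}{p},
\]
so two of the three binomials carry a shifted \emph{top} index. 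Your ratio estimate $\binom{n}{k+j}/\binom{n}{k}=1+\oO(1/\sqrt{n})$ covers only bottom shifts; in the central regime a top shift contributes $\binom{n-c}{k}/\binom{n}{k}=2^{-c}\bigl(1+\oO(1/\sqrt{n})\bigr)$, and this factor of $2^{-2}=\tfrac14$ is exactly what makes the leading constants come out as $3-8\cdot\tfrac14=1$ rather than $3-8=-5$; the $2^{c_{1}}$ in the paper's Proposition~3.1 exists precisely for this. With that correction your outline --- including the observations that the $\max\{\cdot,0\}$ term and the $O(\sqrt{r_{d}})$-coefficient terms are subleading and that the cancellation among the two leading terms is only partial --- matches the paper's argument.
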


\begin{theoremalpha}\label{thm:main2}
There does not exist a function $F_{2}(d)$ such that if $\{p_{d}\}_{d\in\N}$ is a sequence of non-negative integers satisfying $\bigstar$, then
\[
F_{2}(d)\cdot k_{p_{d},2}\left(\F_{t},\O_{\F_{t}}(d,2)\right)=e^{-\frac{a^{2}}{2}}\left(1+\oO\left(\frac{1}{\sqrt{r_{d}}}\right)\right).
\]
\end{theoremalpha}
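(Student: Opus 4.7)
The strategy is to derive an explicit asymptotic expansion for $k_{p_d,2}(\F_{t},\O_{\F_{t}}(d,2))$, and then exhibit an obstruction to Gaussian convergence by comparing two sequences $\{p_d\}$ satisfying $\bigstar$ with different constants $a_1,a_2$.

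First I would invoke Lemmens's combinatorial formulas for the graded Betti numbers of smooth toric surfaces, applied to the lattice polytope $P_{d,2}$ associated to $\O_{\F_{t}}(d,2)$, which is a trapezoid of vertical width $2$. Because the second row lives at the top of the Betti table and the polytope has width exactly $2$, Lemmens's expression for $k_{p_d,2}$ decomposes as a short alternating combination of binomial-type quantities coming from lattice-point counts of $P_{d,2}$ and its sub-polytopes. I would then feed each term into the general binomial-to-normal convergence results that the paper develops (the same machinery that powers Theorem~A). The upshot should be an asymptotic identity of the shape
\[
\frac{c_{0}}{2^{r_{d}}\sqrt{r_{d}}}\cdot k_{p_{d},2}\bigl(\F_{t},\O_{\F_{t}}(d,2)\bigr) \;=\; g(a)\cdot\left(1+\oO\!\left(\tfrac{1}{\sqrt{r_{d}}}\right)\right),
\]
for an explicit normalizing constant $c_0$ and an explicit continuous function $g\colon \R\to \R_{\geq 0}$ which is \emph{not} a scalar multiple of $e^{-a^{2}/2}$. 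Based on the shape of the plots in Figure~\ref{fig:examples} I expect $g$ to be a non-trivial positive linear combination of two Gaussians of different variances (or, equivalently, a Gaussian modulated by an error-function factor).

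Once $g$ is in hand the non-existence statement follows formally. Suppose such an $F_{2}(d)$ existed, and fix any two real numbers $a_{1},a_{2}$ with $g(a_{1})/g(a_{2})\neq e^{(a_{2}^{2}-a_{1}^{2})/2}$; this is possible precisely because $g$ is not a scalar multiple of $e^{-a^{2}/2}$. Choose sequences of non-negative integers $\{p_{d}^{(i)}\}_{d\in\N}$ satisfying $\bigstar$ with constant $a_{i}$ for $i=1,2$. Then by hypothesis
\[
F_{2}(d)\cdot k_{p_{d}^{(i)},2}\bigl(\F_{t},\O_{\F_{t}}(d,2)\bigr) = e^{-a_{i}^{2}/2}\left(1+\oO\!\left(\tfrac{1}{\sqrt{r_d}}\right)\right),
\]
so the ratio $k_{p_{d}^{(1)},2}/k_{p_{d}^{(2)},2}$ must tend to $e^{(a_{2}^{2}-a_{1}^{2})/2}$. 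On the other hand, the asymptotic formula from the previous paragraph forces this ratio to tend to $g(a_{1})/g(a_{2})$, contradicting our choice of $a_{1},a_{2}$.

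\textbf{Main obstacle.} The genuinely delicate step is the asymptotic analysis producing $g$: Lemmens's formula for the second row expresses $k_{p,2}$ as a combination of binomial-type summands that are individually of the same order $2^{r_d}/\sqrt{r_d}$, so partial cancellation occurs and one must track the leading \emph{and} first subleading behavior of each summand with the $\oO(1/\sqrt{r_d})$ error uniform in $a$ on compact sets. Carrying out this bookkeeping carefully—essentially a refined central-limit-type expansion for binomial coefficients near the peak, combined with a shift parameter reflecting the translate structure of the sub-polytopes of $P_{d,2}$—is the technical heart of the argument. Once it is done, the final contradiction is immediate, and in fact gives the stronger statement that the ratio of the largest and second-largest entries of the rescaled row tends to a value strictly different from the Gaussian prediction.
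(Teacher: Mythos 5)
There is a genuine gap: the mechanism you propose for the failure of normality is not the one that actually occurs, and your contradiction argument would not go through as written. When you apply Lemmens's formula to the $q=2$ row of the width-two trapezoid, you do not get "a short alternating combination of binomial-type quantities that are individually of order $2^{r_d}/\sqrt{r_d}$ with partial cancellation." You get a \emph{single} term of the form
\[
k_{p,2}\left(\F_t;\O_{\F_t}(d,2)\right)=\max\left\{p-\tfrac{2}{3}r_d+E_{\delta}+\text{const},\;0\right\}\binom{r_d-2}{p},
\]
whose linear prefactor is supported only on $p\gtrsim\tfrac{2}{3}r_d$. Any sequence $\{p_d\}$ satisfying $\bigstar$ has $p_d=\tfrac{r_d}{2}+a\tfrac{\sqrt{r_d}}{2}+\oo(1)$, so the prefactor equals $\max\{-\tfrac{r_d}{6}+a\tfrac{\sqrt{r_d}}{2}+\text{const},0\}=0$ for all $d\gg0$. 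In other words, the limit profile you call $g$ is identically zero on the entire $\bigstar$ window; the nonzero entries of the second row sit at $a\sim\sqrt{r_d}/3\to\infty$, outside any fixed-$a$ scaling window. Consequently your closing step --- "fix $a_1,a_2$ with $g(a_1)/g(a_2)\neq e^{(a_2^2-a_1^2)/2}$" --- is vacuous, since $g(a_2)=0$ for every choice and the ratio is undefined; and your expectation that $g$ is a positive mixture of Gaussians or an error-function-modulated Gaussian is incorrect. The "delicate subleading cancellation" you flag as the technical heart is a feature of the $q=1$ row (Theorem~A), not of $q=2$.

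The repair is much simpler than your plan: once the Lemmens formula is in place, $k_{p_d,2}=0$ for $d\gg0$ whenever $\{p_d\}$ satisfies $\bigstar$, so $F_2(d)\cdot k_{p_d,2}=0$ eventually, while $e^{-a^2/2}\left(1+\oO\left(1/\sqrt{r_d}\right)\right)$ is eventually bounded away from $0$ for any fixed $a$. No comparison of two sequences, and no asymptotic expansion of binomial coefficients, is needed for this direction. This is exactly the paper's argument.
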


Notice that  in this setting we are considering a slightly weaker positivity condition than was initially considered in \cite{einErmanLazarsfeld15}*{Conjecture~B}. In particular, the embedding line bundle is not growing in an ample fashion, but instead in a semi-ample fashion. This failure of Ein, Erman, and Lazarsfeld's heuristic when $q=2$ is likely related to the fact that the non-vanishing of asymptotic syzygies in the setting of semi-ample growth is quite nuanced. See for example \cite{bruce19}, where the author shows that the non-vanishing of asymptotic syzygies for products of projective spaces is not necessarily described by the non-vanishing theorems of Ein and Lazarsfeld \cite{einLazarsfeld12}. 


The note is structured as follows: In \S\ref{sec:toric-surfaces} we study the asymptotic distribution of graded Betti numbers for a family of toric surfaces. \S\ref{sec:toric-surfaces} also includes the proofs of Theorems~\ref{thm:main1} and \ref{thm:main2}. \S\ref{sec:technical-lemmas} contains technical results used in the proofs in  \S\ref{sec:toric-surfaces}.


\section*{Acknowledgements}  I would like to thank Daniel Erman, Kit Newton, Frank-Olaf Schreyer, and Benedek Valk\'{o}  for their helpful conversations and comments. \texttt{Macaulay2} \cite{M2} provided valuable assistance throughout this work. We also thank an anonymous referee for their many valuable comments and suggestions.




\section{Asymptotic Normality for Certain Toric Surfaces}\label{sec:toric-surfaces}

In this section we consider Ein, Erman, and Lazarsfeld's normality heuristic for certain toric surfaces, and prove slight generalizations of Theorems~\ref{thm:main1} and \ref{thm:main2}. Specifically we consider the toric surface $X_\delta$ whose associated normal fan $\Sigma_\delta\subset \R^2$ has four cones given by the rays $\{(1,0), (0,1), (0,-1), (-2,\delta)\}$, where $\delta\in \N$. 
\begin{center}
\begin{tikzpicture}[scale=.6]
   \fill[fill=gray!30]  (0,0) -- (2.8,0) -- (2.8,2.8) -- (0,2.8) -- cycle;
  \fill[fill=gray!50]  (0,0) -- (2.8,0) -- (2.8,-2.8) -- (0,-2.8) -- cycle;
  \fill[fill=gray!70]  (0,0) -- (0,-2.8) -- (-2.8,-2.8) -- (-2.8,2.8) -- (-1.91,2.8) -- cycle;
  \fill[fill=gray!90]  (0,0) -- (0,2.8) -- (-1.91,2.8) -- cycle;
  
  \draw[thin,dashed,gray!40](0,0)--(-3,0) node[anchor=north east]{};

  \draw[line width=1.35pt,black,-stealth](0,0)--(-2.0667,3) node[anchor=north east]{$\rho_1$};
  \draw[line width=1.35pt,black,-stealth](0,0)--(0,-3.1) node[anchor=north east]{$\rho_2$};
  \draw[line width=1.35pt,black,-stealth](0,0)--(3.1,0) node[anchor=south west]{$\rho_3$};
  \draw[line width=1.35pt,black,-stealth](0,0)--(0,3.1) node[anchor=north east]{$\rho_4$};
 
 \node at (-.6889,1)[circle, fill=black, scale=0.1] {$(-2,\delta)$};
 
\end{tikzpicture}
\end{center}

When $\delta$ is even, $X_{\delta}$ is isomorphic to the Hirzebruch surface $\F_{\delta/2}$. However, when $\delta$ is odd, $X_{\delta}$ is singular, with two $\Z/2\Z$-singularities \cite{coxLittleSchenck11}*{Proposition 10.1.2}.
 
For each ray $\rho_1,\rho_{2},\rho_{3},\rho_4$, there is a corresponding prime torus invariant divisor $D_{\rho_1},D_{\rho_{2}},D_{\rho_3},D_{\rho_4}$, which may be thought of as the irreducible components of $X_{\delta}\setminus T$, where $T \subset X_{\delta}$ is the torus. When $\delta$ is even, so that $X_{\delta}\cong \F_{\delta/2}$, these divisors are related to the generators of $\Nef(\F_{\delta/2})$ described in the introduction as follows: $D_{\rho_{1}}\sim D_{\rho_{3}}\sim F$, $D_{\rho_{2}}\sim E+(\delta/2)F$, and $D_{\rho_{4}}\sim E$. In particular, $\Nef(\F_{\delta/2})$ is generated by $D_{\rho_{1}}$ and $D_{\rho_{2}}$.

We are interested in the syzygies of $X_\delta$ with respect to the divisor $L_d=dD_{\rho_1}+2D_{\rho_2}$  when $\delta$ is even and $L_d=2dD_{\rho_1}+2D_{\rho_2}$ when $\delta$ is odd. The corresponding polytope for these divisors is
\[
\Delta_d=\conv\left\{(0,0), \; (d,0),\; (0,2), \; (d+\delta,2)\right\}.
\]
For example if $\delta=3$ and $d=2$ then $\Delta_d$ is the polytope below:
\begin{center}
\begin{tikzpicture}[scale=.40]

\node at (0,0) [circle, fill=black, scale=0.7] {};
\node at (2,0) [circle, fill=black, scale=0.7] {};
\node at (4,0) [circle, fill=black, scale=0.7] {};
\node at (6,0) [circle, fill=gray!30, scale=0.7] {};
\node at (8,0) [circle, fill=gray!30, scale=0.7] {};
\node at (10,0) [circle, fill=gray!30, scale=0.7] {};

\node at (0,2) [circle, fill=black, scale=0.7] {};
\node at (2,2) [circle, fill=black, scale=0.7] {};
\node at (4,2) [circle, fill=black, scale=0.7] {};
\node at (6,2) [circle, fill=black, scale=0.7] {};
\node at (8,2) [circle, fill=gray!30, scale=0.7] {};
\node at (10,2) [circle, fill=gray!30, scale=0.7] {};

\node at (0,4) [circle, fill=black, scale=0.7] {};
\node at (2,4) [circle, fill=black, scale=0.7] {};
\node at (4,4) [circle, fill=black, scale=0.7] {};
\node at (6,4) [circle, fill=black, scale=0.7] {};
\node at (8,4) [circle, fill=black, scale=0.7] {};
\node at (10,4) [circle, fill=black, scale=0.7] {};

\draw[line width=1.35pt,black,-](0,0) -- (0,4) -- (10,4);

\draw[line width=1.35pt,black,-](0,0) -- (4,0) -- (10,4);

\end{tikzpicture}
\end{center}

Throughout  the remainder of the paper, we write $n_{d}$ for $\#\left(\Delta_d\cap \Z^2\right)$ and $n^{(1)}_{d}$ for $\#\left(\inter(\Delta_{d})\cap \Z^2\right)$. For $d\geq1$ the divisor $L_{d}$ is very ample and defines an embedding $X_{\delta} \rightarrow \P H^{0}(L_{d})\cong \P^{r_{d}}$ where $r_d=n_{d}-1$. A straightforward  argument using Pick's theorem shows that
\[
r_{d}=3d+C_{\delta} \quad \text{and} \quad n_{d}^{(1)}=\frac{1}{3}r_{d}+E_{\delta}, \quad\quad \text{where} \quad\quad 
C_{\delta}\coloneqq \frac{3\delta}{2}+\frac{\gcd(\delta,2)}{2}+1 \quad \text{and} \quad E_{\delta}\coloneqq\frac{\gcd(\delta,2)}{3}-\frac{1}{3}.
\]

We now prove analogues of Theorems~\ref{thm:main1} and ~\ref{thm:main2} for $(X_{\delta};L_{d})$. As $X_{\delta}$ is isomorphic to $\F_{\delta/2}$ when $\delta$ is even, Theorems~\ref{thm:main1} and \ref{thm:main2} follow from these slightly more general theorems about $(X_{\delta};L_{d})$.

First, we show that Ein, Erman, and Lazarsfeld's normality heuristic accurately describes the quantitative behavior of the asymptotic linear syzygies of $(X_{\delta};L_{d})$.

\begin{theorem}\label{thm:toric-q1}
Set $F_{1}(d)=\frac{3\sqrt{2\pi}}{2^{r_{d}}\sqrt{r_{d}}}$. If $\{p_{d}\}_{d\in\N}$ is a sequence of non-negative integers satisfying $\bigstar$ then
\[
F_{1}(d)\cdot k_{p_{d},1}\left(X_{\delta};L_{d}\right)=e^{-\frac{a^{2}}{2}}\left(1+\oO\left(\frac{1}{\sqrt{r_{d}}}\right)\right).
\]
\end{theorem}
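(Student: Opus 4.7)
The plan is to combine Lemmens's combinatorial formula for $k_{p,1}$ of a toric surface, specialized to the polytope $\Delta_d$, with a quantitative local central limit theorem for binomial coefficients (to be established as a technical lemma in \S\ref{sec:technical-lemmas}). Lemmens's formula should express $k_{p_d,1}(X_\delta; L_d)$ as a $\Z$-linear combination, with coefficients depending on the combinatorics of $\Delta_d$ (in particular on $n_d$ and $n_d^{(1)}$), of binomial coefficients of the form $\binom{r_d}{p_d+c}$ for finitely many bounded integer shifts $c$. The technical input from \S\ref{sec:technical-lemmas} to be used is that, uniformly in sequences $\{p_d\}_{d\in\N}$ satisfying $\bigstar$, one has
\[
\binom{r_d}{p_d+c}\;=\;\frac{2^{r_d}}{\sqrt{\pi r_d/2}}\,e^{-a^{2}/2}\bigl(1+\oO(1/\sqrt{r_d})\bigr).
\]

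First, I would use elementary identities such as $\binom{r_d}{p_d+1}=\tfrac{r_d-p_d}{p_d+1}\binom{r_d}{p_d}$ to pull $\binom{r_d}{p_d}$ out as a common factor; under $\bigstar$ each ratio of adjacent binomials is $1+\oO(1/\sqrt{r_d})$. Collecting terms, this should rewrite Lemmens's formula in the shape
\[
k_{p_d,1}(X_\delta;L_d)\;=\;A(d)\binom{r_d}{p_d}\bigl(1+\oO(1/\sqrt{r_d})\bigr)
\]
for some explicit scalar $A(d)$. Second, I would simplify $A(d)$ using the identities $r_d=3d+C_\delta$ and $n_d^{(1)}=r_d/3+E_\delta$ recorded in the excerpt, and verify that $A(d)=r_d/6+\oO(\sqrt{r_d})$, which is precisely what the normalization $F_1(d)=3\sqrt{2\pi}/(2^{r_d}\sqrt{r_d})$ is engineered to cancel after the local CLT is applied. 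Finally, I would multiply by $F_1(d)$, substitute the Gaussian asymptotic, and collect error terms.

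The main obstacle will be the bookkeeping of error terms. Lemmens's formula produces several binomial contributions whose $d$-dependent coefficients grow on the order of $r_d$, so an error of size $O(1)$ in one of these coefficients would produce a relative error of order $1/r_d^{1/2}$ only if multiplied against a Gaussian factor of the right size; a mismatch of even $O(1)$ in the leading coefficient $A(d)$ would destroy the claimed $\oO(1/\sqrt{r_d})$ rate. One therefore needs (i) the local CLT with uniform control over bounded shifts $c$, so that all the binomial contributions can be combined coherently, and (ii) an exact algebraic simplification of Lemmens's formula, not merely an asymptotic one, to pin down the leading coefficient $A(d)$ to within an additive $\oO(\sqrt{r_d})$ before passing to the Gaussian approximation. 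Both ingredients are natural deliverables of \S\ref{sec:technical-lemmas} and an explicit specialization of Lemmens's formula to the polytope $\Delta_d$.
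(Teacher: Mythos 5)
Your proposal is essentially the paper's argument: specialize Lemmens's formula (\cite{lemmens18}*{Corollary~5}) to $\Delta_d$, substitute $p_d=\tfrac{r_d}{2}+a\tfrac{\sqrt{r_d}}{2}$ using $\bigstar$, and apply a quantitative local de Moivre--Laplace theorem with uniform control over bounded shifts (the paper's Proposition~\ref{lem:binom-to-norm}); your predicted leading coefficient $A(d)=\tfrac{r_d}{6}+\oO(\sqrt{r_d})$ is exactly right. One caution on execution: the formula actually involves $\binom{r_d-2}{p_d}$ and $\binom{r_d-2}{p_d-1}$, i.e.\ shifts in the \emph{top} index, for which the ratio to $\binom{r_d}{p_d}$ is $\tfrac14\bigl(1+\oO(1/\sqrt{r_d})\bigr)$ rather than $1+\oO(1/\sqrt{r_d})$ as for lower-index shifts --- this $2^{c_1}$ factor is precisely what converts the coefficients $\tfrac{r_d}{2}$ and $-\tfrac{4}{3}r_d$ into $\tfrac{r_d}{2}-\tfrac13 r_d=\tfrac{r_d}{6}$, so omitting it would give the wrong (indeed negative) constant. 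You should also note that Lemmens's formula contains a term $\max\{p_d-\tfrac23 r_d+E_\delta+1,\,0\}\binom{r_d-2}{p_d-1}$, which vanishes for $d\gg0$ under $\bigstar$ since $p_d-\tfrac23 r_d\sim-\tfrac{r_d}{6}$.
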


\begin{proof}
Using \cite{lemmens18}*{Corollary 5} together with the fact that $n_{d}=r_{d}+1$ and $n^{(1)}_{d}=\frac{1}{3}r_{d}+E_{\delta}$, we know that
\begin{align*}
k_{p_{d},1}\left(X_{\delta};L_{d}\right)
&=\max\left\{p_{d}-\frac{2}{3}r_{d}+E_{\delta}+1,\;\;0\right\}\binom{r_{d}-2}{p_{d}-1}+p_{d}\binom{r_{d}}{p_{d}+1}-\left(\frac{4}{3}r_{d}+E_{\delta}-1\right)\binom{r_{d}-2}{p_{d}-1}.
\end{align*}
By assumption $\bigstar$, we may replace $p_{d}$ with $\frac{r_{d}}{2}+a\frac{\sqrt{r_{d}}}{2}$ giving 
\begin{align}\label{eq2:toric-q1}
F_{1}(d)\cdot k_{p_{d},1}\left(X_{\delta};L_{d}\right)&
\sim F_{1}(d)\left(\max\left\{-\frac{r_{d}}{6}+a\frac{\sqrt{r_{d}}}{2}+E_{\delta}+1,\;\;0\right\}\binom{r_{d}-2}{p_{d}-1}+\left(\frac{r_{d}}{2}+a\frac{\sqrt{r_{d}}}{2}\right)\binom{r_{d}}{p_{d}+1}-\left(\frac{4}{3}r_{d}+E_{\delta}-1\right)\binom{r_{d}-2}{p_{d}-1}\right).
\end{align}
Proposition~\ref{lem:binom-to-norm} implies that for any constants $c_{1},c_{2}\in\Z$ and $c_{3}\in\R$ both $F_{1}(d)\cdot a\frac{\sqrt{r_{d}}}{2}\cdot \binom{r_{d}+c_{1}}{p_{d}+c_{2}}$ and $F_{1}(d)\cdot c_{3}\cdot  \binom{r_{d}+c_{1}}{p_{d}+c_{2}}$ tend to zero as $d\to \infty$. Hence we may ignore these terms in the above line, and rewrite \eqref{eq2:toric-q1} as
\begin{align*}\label{eq3:toric-q1}
F_{1}(d)\cdot k_{p_{d},1}\left(X_{\delta};L_{d}\right)&
\sim F_{1}(d)\cdot\left(\max\left\{-\frac{r_{d}}{6},\;\;0\right\}\binom{r_{d}-2}{p_{d}-1}+\frac{r_{d}}{2}\binom{r_{d}}{p_{d}+1}-\frac{4}{3}r_{d}\binom{r_{d}-2}{p_{d}}\right)
\sim \tfrac{3\sqrt{2\pi r_{d}}}{2^{r_{d}+1}}\binom{r_{d}}{p_{d}+1}-\tfrac{8\sqrt{2\pi r_{d}}}{2^{r_{d}+1}}\binom{r_{d}-2}{p_{d}-1}.
\end{align*}
The result now follows directly from Proposition~\ref{lem:binom-to-norm}. 
\end{proof}

Our second theorem in this section shows that the higher degree asymptotic syzygies of $(X_{\delta};L_{d})$ do not behave as suggested by Ein, Erman, and Lazarsfeld's normality heuristic. In particular, the entries in the $q=2$ row of the Betti table of $(X_{\delta};L_{d})$ do not converge to a normal distribution as $d\to \infty$. 

\begin{theorem}\label{thm:toric-q2}
There does not exist a function $F_{2}(d)$ such that if $\{p_{d}\}_{d\in\N}$ is a sequence of non-negative integers satisfying $\bigstar$ then
\[
F_{2}(d)\cdot k_{p_{d},2}\left(X_{\delta};L_{d}\right)=e^{-\frac{a^{2}}{2}}\left(1+\oO\left(\frac{1}{\sqrt{r_{d}}}\right)\right).
\]
\end{theorem}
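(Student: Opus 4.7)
My plan is to follow the same strategy as in the proof of Theorem~\ref{thm:toric-q1}. The starting point is an explicit formula from Lemmens' paper \cite{lemmens18}, analogous to \cite{lemmens18}*{Corollary~5} but for the second row of the Betti table, expressing $k_{p,2}(X_{\delta};L_{d})$ as a signed linear combination of binomial coefficients whose coefficients are polynomials in $p$, $r_{d}$, $n_{d}^{(1)}$, and $E_{\delta}$. Substituting $p_{d}=\tfrac{r_{d}}{2}+a\tfrac{\sqrt{r_{d}}}{2}$ via assumption $\bigstar$ and repeatedly invoking Proposition~\ref{lem:binom-to-norm}, I would discard the contributions of strictly lower order than $2^{r_{d}}/\sqrt{r_{d}}$ and isolate the leading asymptotic behavior of $k_{p_{d},2}(X_{\delta};L_{d})$ as $d\to\infty$.

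The crucial difference from the $q=1$ case is that the resulting leading-order asymptotic will \emph{not} have the shape $c\cdot\tfrac{2^{r_{d}}}{\sqrt{r_{d}}}\,e^{-a^{2}/2}$ with $c$ independent of $a$; rather, it will take the shape
\[
k_{p_{d},2}(X_{\delta};L_{d})\;\sim\;\tfrac{2^{r_{d}}}{\sqrt{r_{d}}}\,G(a)\,e^{-a^{2}/2},
\]
for some non-constant function $G(a)$. The figure in the introduction already suggests this picture: the second-row entries look like only a \emph{fraction} of a normal distribution, which is consistent with $G(a)$ being a truncation (e.g.\ of the form $\max\{\alpha-\beta a,0\}$) arising from the $\max$-expression in Lemmens' second-row formula, whose threshold sits within the window $p\sim\tfrac{r_{d}}{2}+O(\sqrt{r_{d}})$ rather than far outside it, as happened in the $q=1$ case.

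To derive the contradiction, I would pick two values $a_{1},a_{2}\in\R$ with $a_{2}=-a_{1}$ (so that $e^{-a_{1}^{2}/2}=e^{-a_{2}^{2}/2}$) but $G(a_{1})\neq G(a_{2})$, which is possible since $G$ is asymmetric under $a\mapsto -a$. Let $\{p_{d}^{(1)}\}$ and $\{p_{d}^{(2)}\}$ be corresponding sequences satisfying $\bigstar$ with limits $a_{1}$ and $a_{2}$. For any candidate normalization $F_{2}(d)$, the conclusion of the theorem would force
\[
\lim_{d\to\infty}F_{2}(d)\,k_{p_{d}^{(1)},2}=\lim_{d\to\infty}F_{2}(d)\,k_{p_{d}^{(2)},2},
\]
but the asymptotic above shows these limits differ by the nonzero factor $G(a_{1})/G(a_{2})\neq 1$, which is the desired contradiction.

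The main obstacle will be the careful asymptotic bookkeeping inside Lemmens' second-row formula: identifying exactly which combinations of binomial coefficients contribute at the dominant order $2^{r_{d}}/\sqrt{r_{d}}$, tracking the signs and magnitudes of the $\max$-term as $a$ varies, and pinning down an explicit form of $G(a)$ sharp enough to exhibit the asymmetry $G(a_{1})\neq G(-a_{1})$. As in Theorem~\ref{thm:toric-q1}, the analytic core of this bookkeeping — in particular, the uniform binomial-to-Gaussian estimates that let one ignore subleading terms like $c\binom{r_{d}+c_{1}}{p_{d}+c_{2}}$ or $a\sqrt{r_{d}}\binom{r_{d}+c_{1}}{p_{d}+c_{2}}$ after the normalization $F_{2}(d)$ — is packaged in Proposition~\ref{lem:binom-to-norm} from Section~\ref{sec:technical-lemmas}.
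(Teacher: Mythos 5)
Your overall plan --- plug Lemmens' second-row formula into the $\bigstar$ window and exhibit sequences on which any single normalization $F_{2}(d)$ must fail --- points in the right direction, but the structural claim at its core is not what Lemmens' formula actually gives, and so the specific contradiction you propose would not get off the ground. The second row is not a signed combination of several binomial terms; it is the single term
\[
k_{p,2}\left(X_{\delta};L_{d}\right)=\max\left\{p-\tfrac{2}{3}r_{d}+E_{\delta}+2,\;0\right\}\binom{r_{d}-2}{p},
\]
so the truncation threshold sits at $p\approx\tfrac{2}{3}r_{d}$, i.e.\ at distance on the order of $r_{d}$ (not $\sqrt{r_{d}}$) from $r_{d}/2$. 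Hence for any sequence satisfying $\bigstar$ one has $\max\{-\tfrac{r_{d}}{6}+a\tfrac{\sqrt{r_{d}}}{2}+E_{\delta}+2,\,0\}=0$ for $d\gg0$, so $k_{p_{d},2}\left(X_{\delta};L_{d}\right)$ vanishes identically in the window. Your proposed profile $G(a)$ is therefore identically zero: there is no pair $a_{1},a_{2}$ with $G(a_{1})\neq G(a_{2})$, and the ratio $G(a_{1})/G(a_{2})$ you want to compare is $0/0$. (The ``fraction of a normal distribution'' visible in Figure~\ref{fig:examples} is supported entirely to the right of the window, near $p\in[\tfrac{2}{3}r_{d},r_{d}]$.)

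The good news is that once you carry out the bookkeeping you describe, the theorem falls out by an argument simpler than the one you planned, and this is exactly the paper's proof: since $k_{p_{d},2}\left(X_{\delta};L_{d}\right)=0$ for $d\gg0$, we get $F_{2}(d)\cdot k_{p_{d},2}\left(X_{\delta};L_{d}\right)=0$ for every choice of $F_{2}(d)$, and this cannot equal $e^{-\frac{a^{2}}{2}}\left(1+\oO\left(1/\sqrt{r_{d}}\right)\right)$ because $e^{-\frac{a^{2}}{2}}>0$. So the gap in your proposal is the unverified assumption that the truncation occurs inside the $\sqrt{r_{d}}$-window with a nontrivial, asymmetric profile; checking this against \cite{lemmens18}*{Corollary~5} shows it is false, and replaces your asymmetry argument with a vanishing argument. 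In particular, no appeal to Proposition~\ref{lem:binom-to-norm} is needed for $q=2$.
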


\begin{proof}
Similar to the proof of Theorem~\ref{thm:toric-q1}, by using \cite{lemmens18}*{Corollary 5}, the fact that $n_{d}=r_{d}+1$ and $n^{(1)}_{d}=\frac{1}{3}r_{d}+E_{\delta}$, and assumption $\bigstar$ we see that
\begin{align*}
k_{p_{d},2}\left(X_{\delta};L_{d}\right)=\max\left\{\frac{-r_{d}}{6}+a\frac{\sqrt{r_{d}}}{2}+E_{\delta}+2,\;\;0\right\}\binom{r_{d}-2}{p_{d}}.
\end{align*}
However, $\frac{-r_{d}}{6}+a\frac{\sqrt{r_{d}}}{2}+E_{\delta}+3<0$ for $d\gg0$, and so for any function $F_{2}(d)$ we have that 
\[
F_{2}(d)\cdot k_{p_{d},2}\left(X_{\delta};L_{d}\right) = F_{2}(d)\cdot \max\left\{\frac{-r_{d}}{6}+a\frac{\sqrt{r_{d}}}{2}+E_{\delta}+3,\;\;0\right\}\binom{r_{d}-2}{p_{d}}=0.
\]
\end{proof}

\begin{remark}
Both Theorem~\ref{thm:toric-q1} and \ref{thm:toric-q2} only depend on the values of $k_{p,q}\left(X_{\delta};L_{d}\right)$ for $p$ in a neighborhood of $\frac{1}{2}r_{d}$, and hence are related in part to the study of Green's $N_{p}$-property \cite{schenck04,heringSchenckSmith06}. In particular, both theorems may also be deduced from Schenck's work showing that $k_{p,2}(X_{\delta};L_{d})=0$ for all $p\leq \#(\partial \Delta_d\cap \Z^2)-3$ \cite{schenck04}*{Corollary~2.1}. Since in the cases we are considering $\Delta_{d}$ is height two, $\#(\partial \Delta_d\cap \Z^2)$ is relatively large compared to $\frac{1}{2}r_{d}$. Using this one can deduce Theorem~\ref{thm:toric-q2}. One may then deduce Theorem~\ref{thm:toric-q1}  by noting that this vanishing implies that $k_{p,1}\left(X_{\delta};L_{d}\right)$ for $p$ around $\frac{1}{2}r_{d}$ can be deduced from the Hilbert function of $R(X_{\delta};L_{d})$. We thank the referee for pointing out this alternative argument. 
\end{remark}

\section{Technical Results}\label{sec:technical-lemmas}

Here we gather a series of technical results crucial to the proofs of our main theorems. The key result of this section is the following special case of the local central limit theorem. While standard in many probability texts, see \cite{durrett10}*{Theorem~3.5.3}, we take the time to prove it as this version of the local de Moivre-Laplace theorem, with precise constants and error terms, is crucial to proving our main theorems.

\begin{prop}\label{lem:binom-to-norm}
Suppose $\{r_{d}\}_{d\in\N}$ is a sequence such that $r_{d}\to\infty$ as $d\to\infty$. If there exists a sequence $\{p_{d}\}_{d\in\N}$ of non-negative integers satisfying $\bigstar$ then for any constants $c_{1},c_{2}\in \Z$:
\[
\frac{\sqrt{2\pi r_{d}}}{2^{r_{d}+1}}\binom{r_{d}+c_{1}}{p_{d}+{c_2}}=2^{c_{1}}e^{-\frac{a^{2}}{2}}\left[1+O\left(\frac{1}{\sqrt{r_{d}}}\right)\right].
\]
\end{prop}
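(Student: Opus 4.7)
The statement is a quantitative de Moivre--Laplace local limit theorem for the binomial coefficient with slightly shifted arguments, so the plan is to apply Stirling's formula $n! = \sqrt{2\pi n}(n/e)^n(1+O(1/n))$ with explicit error to each of the factorials $N!$, $k!$, $(N-k)!$, where $N := r_d + c_1$ and $k := p_d + c_2$, and then bookkeep the resulting error carefully. The first step is to combine $\bigstar$ with the definitions to write $\beta := k - N/2 = \tfrac{a}{2}\sqrt{r_d} + (c_2 - c_1/2) + o(1)$. Expanding $1/N = 1/r_d + O(1/r_d^2)$ then gives the crucial estimate $\beta^2/N = a^2/4 + O(1/\sqrt{r_d})$; this single bound is responsible for the error rate stated in the conclusion.

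Applying Stirling termwise yields
\[
\binom{N}{k} = \sqrt{\frac{N}{2\pi k(N-k)}}\cdot \frac{N^N}{k^k(N-k)^{N-k}}\cdot \bigl(1+O(1/r_d)\bigr).
\]
The square-root prefactor simplifies via $k(N-k) = N^2/4 - \beta^2$ and $\beta^2/N^2 = O(1/r_d)$ to $\sqrt{2/(\pi r_d)}\bigl(1 + O(1/r_d)\bigr)$, so all of the real work is isolated in the exponential ratio.

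For that, I would substitute $k = N(1+x)/2$ with $x := 2\beta/N$ and compute
\[
\log\frac{N^N}{k^k(N-k)^{N-k}} = N\log 2 - k\log(1+x) - (N-k)\log(1-x).
\]
Taylor-expanding the two logarithms and regrouping by parity shows that the odd powers collect as $-2\beta\sum_{j\geq 0} x^{2j+1}/(2j+1)$ and the even powers as $N\sum_{j\geq 1} x^{2j}/(2j)$. The $j=0$ odd term and the $j=1$ even term combine to the leading contribution $-2\beta^2/N$; since $\beta^2 = O(r_d)$, every remaining term is of size $O(\beta^{2j}/N^{2j-1}) = O(1/r_d^{j-1})$, which is absorbable into an $O(1/r_d)$ remainder. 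Plugging in $\beta^2/N = a^2/4 + O(1/\sqrt{r_d})$ produces
\[
\frac{N^N}{k^k(N-k)^{N-k}} = 2^N e^{-a^2/2}\bigl(1 + O(1/\sqrt{r_d})\bigr).
\]

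Assembling the three factors, the product $\sqrt{2\pi r_d}\cdot \sqrt{2/(\pi r_d)}$ equals $2$, which cancels against $2^{r_d+1}$ after extracting $2^N = 2^{c_1}\cdot 2^{r_d}$, producing $2^{c_1}e^{-a^2/2}\bigl(1 + O(1/\sqrt{r_d})\bigr)$ as claimed. The main obstacle is purely bookkeeping: one must check that every higher-order term in the Taylor expansion is genuinely at most $O(1/r_d)$ uniformly in the regime $\beta = O(\sqrt{r_d})$, and confirm that the rate $O(1/\sqrt{r_d})$ cannot be improved, since $\bigstar$ only constrains $p_d - r_d/2 - a\sqrt{r_d}/2$ to be $o(1)$ and this $o(1)$ slack feeds linearly into $\beta^2/N$ and therefore into the exponent $e^{-a^2/2}$.
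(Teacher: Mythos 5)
Your proof is correct, and it rests on the same two pillars as the paper's: Stirling's formula for the three factorials and a Taylor expansion of the logarithm of the exponential ratio. The organization, however, is genuinely different. The paper proceeds in three separate stages: it first shows (via limits of the form $\lim_d (1+c_2/p_d)^{p_d}(1-c_1/(r_d+c_1))^{r_d}(1+\tilde c/(r_d-p_d))^{r_d-p_d}=1$) that the shifts $c_1,c_2$ may be stripped off, then substitutes $p_d=\tfrac{r_d}{2}+\tfrac{a\sqrt{r_d}}{2}$ exactly, and finally invokes a standalone lemma (Lemma~\ref{lem:tech-key-id}) for the resulting unshifted expression. You instead track the single quantity $\beta=k-N/2=\tfrac{a}{2}\sqrt{r_d}+(c_2-c_1/2)+o(1)$ through one unified expansion, with the parity split of the $\log(1\pm x)$ series isolating the leading term $-2\beta^2/N$. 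This buys you something concrete: the shifts, the $a\sqrt{r_d}/2$ displacement, and the $o(1)$ slack from $\bigstar$ all enter through $\beta^2/N=a^2/4+O(1/\sqrt{r_d})$ and are therefore quantified to the stated rate, whereas the paper's reduction steps are phrased as bare limits (i.e., $1+o(1)$ factors) and its substitution of $p_d$ by $\tfrac{r_d}{2}+\tfrac{a\sqrt{r_d}}{2}$ is done without an explicit error bound, so your version actually delivers the $O(1/\sqrt{r_d})$ error term more cleanly. Your final remark that the rate cannot be improved is also apt, since the $o(1)$ cross term $a\sqrt{r_d}(c_2-c_1/2)/N$ is genuinely of order $1/\sqrt{r_d}$ in general.
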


Before proving this proposition we need the following lemma. 

\begin{lemma}\label{lem:tech-key-id}
Suppose $\{r_{d}\}_{d\in\N}$ is a sequence such that $r_{d}\to\infty$ as $d\to\infty$. For any $a\in \R$,
\begin{equation}\label{eq1:tech-key-id}
\left(\frac{r_{d}}{r_{d}+a\sqrt{r_{d}}}\right)^{\left(\frac{r_{d}}{2}+\frac{a\sqrt{r_{d}}}{2}\right)}\left(\frac{r_{d}}{r_{d}-a\sqrt{r_{d}}}\right)^{\left(\frac{r_{d}}{2}-\frac{a\sqrt{r_{d}}}{2}\right)}=e^{-\frac{a^2}{2}}\left[1+\oO\left(\frac{1}{\sqrt{r_{d}}}\right)\right].
\end{equation}
\end{lemma}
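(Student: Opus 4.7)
The natural approach is to take logarithms and use the Taylor expansion of $\log(1\pm x)$ about $x=0$, keeping careful track of error terms.

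First I would rewrite the two base fractions in the form $1\pm \frac{a}{\sqrt{r_d}}$:
\[
\frac{r_d}{r_d + a\sqrt{r_d}} = \frac{1}{1 + a/\sqrt{r_d}}, \qquad \frac{r_d}{r_d - a\sqrt{r_d}} = \frac{1}{1 - a/\sqrt{r_d}}.
\]
Taking the logarithm of the left-hand side of \eqref{eq1:tech-key-id} then gives
\[
\log(\text{LHS}) \;=\; -\tfrac{1}{2}(r_d + a\sqrt{r_d})\log\!\left(1+\tfrac{a}{\sqrt{r_d}}\right) - \tfrac{1}{2}(r_d - a\sqrt{r_d})\log\!\left(1-\tfrac{a}{\sqrt{r_d}}\right).
\]

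Next I would apply the Taylor expansion
\[
\log(1\pm x) = \pm x - \tfrac{x^2}{2} \pm \tfrac{x^3}{3} + O(x^4)
\]
with $x = a/\sqrt{r_d}$, then multiply each expansion by the appropriate prefactor $\tfrac{1}{2}(r_d \pm a\sqrt{r_d})$. The key observation is the pattern of cancellations: the terms of order $\sqrt{r_d}$ (arising from $r_d\cdot(\pm a/\sqrt{r_d})$) cancel between the two summands after the overall sign is accounted for; the constant pieces $\pm a^2/4 - (\mp a^2/2)$ combine to give $+a^2/2$; and the $O(1/\sqrt{r_d})$ contributions (from the cross terms involving $a^3$) likewise cancel, so the residual error sits at order $O(1/r_d)$. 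The net result is
\[
\log(\text{LHS}) \;=\; -\tfrac{a^2}{2} + O\!\left(\tfrac{1}{r_d}\right).
\]

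Finally, exponentiating and using $e^{O(1/r_d)} = 1 + O(1/r_d) = 1 + \oO(1/\sqrt{r_d})$ would yield the claimed asymptotic. The whole argument is conceptually a one-line computation; the only mild obstacle is organizing the Taylor expansion bookkeeping cleanly enough to see that the surviving error is of order $1/r_d$ (and not merely $1/\sqrt{r_d}$), which is what makes the $\oO(1/\sqrt{r_d})$ estimate in the conclusion genuine. In practice I would tabulate the contributions at orders $\sqrt{r_d},\,1,\,1/\sqrt{r_d}$ and $1/r_d$ side by side and verify the cancellations term by term.
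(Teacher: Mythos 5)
Your proposal is correct and follows essentially the same route as the paper: take logarithms, Taylor-expand $\log(1\pm a/\sqrt{r_d})$, observe the cancellation of the $\sqrt{r_d}$-order terms, and exponentiate. The only difference is that you carry the expansion one order further to get an $O(1/r_d)$ error in the logarithm, which is more than is needed — the paper stops at second order, accepts an $O(1/\sqrt{r_d})$ error there, and this already gives the stated conclusion since $e^{O(1/\sqrt{r_d})}=1+\oO(1/\sqrt{r_d})$.
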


\begin{proof}
By taking $\log$ of the left hand side of equation~\eqref{eq1:tech-key-id} we see that
\begin{align}\label{eq2:tech-key-id}
-\log \left[\left(\frac{r_{d}}{r_{d}+a\sqrt{r_{d}}}\right)^{\left(\frac{r_{d}}{2}+\frac{a\sqrt{r_{d}}}{2}\right)}\left(\frac{r_{d}}{r_{d}-a\sqrt{r_{d}}}\right)^{\left(\frac{r_{d}}{2}-\frac{a\sqrt{r_{d}}}{2}\right)}\right]=
\frac{r_{d}+a\sqrt{r_{d}}}{2}\log\left(1+\frac{a}{\sqrt{r_{d}}}\right)+\frac{r_{d}-a\sqrt{r_{d}}}{2}\log\left(1-\frac{a}{\sqrt{r_{d}}}\right).
\end{align}
Using the Taylor expansion of $\log(1+x)$ the right hand side of equation~\eqref{eq2:tech-key-id} may be re-written as follows
\begin{align}\label{eq3:tech-key-id}
\frac{r_{d}+a\sqrt{r_{d}}}{2}\left(\frac{a}{\sqrt{r_{d}}}-\frac{a^2}{2r_{d}}+O\left(r_{d}^{-3/2}\right)\right)+\frac{r_{d}-a\sqrt{r_{d}}}{2}\left(-\frac{a}{\sqrt{r_{d}}}-\frac{a^2}{2r_{d}}-O\left(r_{d}^{-3/2}\right)\right).
\end{align}
Simplifying equation~\eqref{eq3:tech-key-id} and combining it with equation~\eqref{eq2:tech-key-id} shows that
\begin{align}
\log \left[\left(\frac{r_{d}}{r_{d}+a\sqrt{r_{d}}}\right)^{\left(\frac{r_{d}}{2}+\frac{a\sqrt{r_{d}}}{2}\right)}\left(\frac{r_{d}}{r_{d}-a\sqrt{r_{d}}}\right)^{\left(\frac{r_{d}}{2}-\frac{a\sqrt{r_{d}}}{2}\right)}\right]=
-\frac{a^2}{2}+O\left(\frac{1}{\sqrt{r_{d}}}\right).
\end{align}
The result now follows by exponentiating both sides of the above equation.
\end{proof}



\begin{proof}[Proof of Proposition~\ref{lem:binom-to-norm}]
Throughout the proof we write $\tilde{c}$ for $c_{1}-c_{2}$. A straightforward computation shows that $O(r_d)$, $O(r_d+c_{1})$, $O(p_{d}+c_{2})$ and $O(r_{d}-p_{d}+\tilde{c})$ are all equal, and so we will not distinguish between them. Using Stirling's formula for $n!$ we see that
\begin{align}\label{eq2:binom-to-norm}
\begin{split}
&\frac{\sqrt{2\pi r_{d}}}{2^{r_{d}+1}}\binom{r_{d}+c_{1}}{p_{d}+c_{2}}
\\ &=
2^{-(r_{d}+1)}\sqrt{\frac{r^2_d+c_{1}r_{d}}{(p_{d}+c_{2})(r_{d}-p_{d}+\tilde{c})}}
\left(\frac{r_{d}+c_{1}}{p_{d}+c_{2}}\right)^{c_{2}}\left(\frac{r_{d}+c_{1}}{r_{d}-p_{d}+\tilde{c}}\right)^{\tilde{c}}
\left(\frac{r_{d}+c_{1}}{p_{d}+c_{2}}\right)^{p_{d}}\left(\frac{r_{d}+c_{1}}{r_{d}-p_{d}+\tilde{c}}\right)^{r_{d}-p_{d}}\left(1+O\left(\frac{1}{r_d}\right)\right).
\end{split}
\end{align}
As $c_{1}$ and $c_{2}$ are constants and  $r_{d}$ and $p_{d}$ tend to infinity as $d\to\infty$, using assumption $\bigstar$ one can show that 
\[
\lim_{d\to\infty}\left(\frac{r_{d}+c_{1}}{p_{d}+c_{2}}\right)^{c_{2}}=2^{c_{2}}, \quad \lim_{d\to\infty}\left(\frac{r_{d}+c_{1}}{r_{d}-p_{d}+\tilde{c}}\right)^{\tilde{c}}=2^{\tilde{c}}, \quad \text{and} \quad \lim_{d\to\infty} \frac{r_{d}^2}{r_{d}p_{d}-p_{d}^{2}}=4.
\]
Using these limits, we see that \eqref{eq2:binom-to-norm} can be simplified to
\begin{align}\label{eq3:binom-to-norm}
\frac{\sqrt{2\pi r_{d}}}{2^{r_{d}+1}}\binom{r_{d}+c_{1}}{p_{d}+c_{2}}=2^{-r_{d}}2^{c_{1}}
\left(\frac{r_{d}+c_{1}}{p_{d}+c_{2}}\right)^{p_{d}}\left(\frac{r_{d}+c_{1}}{r_{d}-p_{d}+\tilde{c}}\right)^{r_{d}-p_{d}}\left(1+O\left(\frac{1}{r_{d}}\right)\right).
\end{align}
Now we show that we can reduce to the case when $c_{1}=c_{2}=\tilde{c}=0$. Towards this, notice that by the assumption $\bigstar$, we know that $r_{d}-p_{d}$ tends to infinity as $d\to\infty$. Combining this with the fact that both $r_{d}$ and $p_{d}$ also tend to infinity as $d\to\infty$, we see that
\begin{align*}
\lim_{d\to\infty}
\frac{
\left(\frac{r_{d}}{p_{d}}\right)^{p_{d}}\left(\frac{r_{d}}{r_{d}-p_{d}}\right)^{r_{d}-p_{d}}
}{
\left(\frac{r_{d}+c_{1}}{p_{d}+c_{2}}\right)^{p_{d}}\left(\frac{r_{d}+c_{1}}{r_{d}-p_{d}+\tilde{c}}\right)^{r_{d}-p_{d}}
}&=
\lim_{d\to\infty}\left[
\left(\frac{p_{d}+c_{2}}{p_{d}} \frac{r_{d}}{r_{d}+c_{1}}\right)^{p_{d}}\left(\frac{r_{d}}{r_{d}+c_{1}}\frac{r_{d}-p_{d}+\tilde{c}}{r_{d}-p_{d}}\right)^{r_{d}-p_{d}}\right]\\
&=\lim_{d\to\infty}\left(1+\frac{c_{2}}{p_{d}}\right)^{p_{d}} \left(1-\frac{c_{1}}{r_{d}+c_{1}}\right)^{r_{d}}
\left(1+\frac{\tilde{c}}{r_{d}-p_{d}}\right)^{r_{d}-p_{d}}=e^{c_{2}}\cdot e^{-c_{1}}\cdot e^{\tilde{c}}=1.
\end{align*}
In particular, we may re-write equation~\eqref{eq3:binom-to-norm} as
\begin{align}\label{eq5:binom-to-norm}
\frac{\sqrt{2\pi r_{d}}}{2^{r_{d}+1}}\binom{r_{d}+c_{1}}{p_{d}+c_{2}}=2^{-r_{d}}2^{c_{1}}
\left(\frac{r_{d}}{p_{d}}\right)^{p_{d}}\left(\frac{r_{d}}{r_{d}-p_{d}}\right)^{r_{d}-p_{d}}\left(1+O\left(\frac{1}{r_{d}}\right)\right).
\end{align}
Assumption $\bigstar$ allows us to substitute $\frac{r_{d}}{2}+\frac{a\sqrt{r_{d}}}{2}$ everywhere we see $p_{d}$ in \eqref{eq5:binom-to-norm}. Doing this gives
\begin{align}\label{eq6:binom-to-norm}
\frac{\sqrt{2\pi r_{d}}}{2^{r_{d}+1}}\binom{r_{d}+c_{1}}{p_{d}+c_{2}}=2^{c_{1}}
\left(\frac{r_{d}}{r_{d}+a\sqrt{r_{d}}}\right)^{\left(\frac{r_{d}}{2}+\frac{a\sqrt{r_{d}}}{2}\right)}\left(\frac{r_{d}}{r_{d}-a\sqrt{r_{d}}}\right)^{\left(\frac{r_{d}}{2}-\frac{a\sqrt{r_{d}}}{2}\right)}\left(1+O\left(\frac{1}{{d}}\right)\right),
\end{align}
from which the result follows using Lemma~\ref{lem:tech-key-id}.
\end{proof}


\begin{bibdiv}
\begin{biblist}
\bib{bruce19}{article}{
   author={Bruce, Juliette},
   title={Asymptotic syzygies in the setting of semi-ample growth},
   date={2019},
   note={ArXiv pre-print: \url{https://arxiv.org/abs/1904.04944}}
}

\bib{coxLittleSchenck11}{book}{
   author={Cox, David A.},
   author={Little, John B.},
   author={Schenck, Henry K.},
   title={Toric varieties},
   series={Graduate Studies in Mathematics},
   volume={124},
   publisher={American Mathematical Society, Providence, RI},
   date={2011},
   pages={xxiv+841},
   isbn={978-0-8218-4819-7},
   review={\MR{2810322}},
   doi={10.1090/gsm/124},
}

\bib{durrett10}{book}{
   author={Durrett, Rick},
   title={Probability: theory and examples},
   series={Cambridge Series in Statistical and Probabilistic Mathematics},
   volume={31},
   edition={4},
   publisher={Cambridge University Press, Cambridge},
   date={2010},
   pages={x+428},
   isbn={978-0-521-76539-8},
   review={\MR{2722836}},
   doi={10.1017/CBO9780511779398},
}

\bib{einErmanLazarsfeld15}{article}{
   author={Ein, Lawrence},
   author={Erman, Daniel},
   author={Lazarsfeld, Robert},
   title={Asymptotics of random Betti tables},
   journal={J. Reine Angew. Math.},
   volume={702},
   date={2015},
   pages={55--75},
   issn={0075-4102},
   review={\MR{3341466}},
   doi={10.1515/crelle-2013-0032},
}

\bib{einLazarsfeld12}{article}{
   author={Ein, Lawrence},
   author={Lazarsfeld, Robert},
   title={Asymptotic syzygies of algebraic varieties},
   journal={Invent. Math.},
   volume={190},
   date={2012},
   number={3},
   pages={603--646},
   issn={0020-9910},
   review={\MR{2995182}},
   doi={10.1007/s00222-012-0384-5},
}

\bib{ermanYang18}{article}{
   author={Erman, Daniel},
   author={Yang, Jay},
   title={Random flag complexes and asymptotic syzygies},
   journal={Algebra Number Theory},
   volume={12},
   date={2018},
   number={9},
   pages={2151--2166},
   issn={1937-0652},
   review={\MR{3894431}},
   doi={10.2140/ant.2018.12.2151},
}
		
\bib{heringSchenckSmith06}{article}{
   author={Hering, Milena},
   author={Schenck, Hal},
   author={Smith, Gregory G.},
   title={Syzygies, multigraded regularity and toric varieties},
   journal={Compos. Math.},
   volume={142},
   date={2006},
   number={6},
   pages={1499--1506},
   issn={0010-437X},
   review={\MR{2278757}},
   doi={10.1112/S0010437X0600251X},
}
			
\bib{lemmens18}{article}{
   author={Lemmens, Alexander},
   title={On the $n$-th row of the graded Betti table of an $n$-dimensional
   toric variety},
   journal={J. Algebraic Combin.},
   volume={47},
   date={2018},
   number={4},
   pages={561--584},
   issn={0925-9899},
   review={\MR{3813640}},
   doi={10.1007/s10801-017-0786-y},
}

\bib{M2}{misc}{
    label={M2},
    author={Grayson, Daniel~R.},
    author={Stillman, Michael~E.},
    title = {Macaulay 2, a software system for research
	    in algebraic geometry},
    note = {Available at \url{http://www.math.uiuc.edu/Macaulay2/}},
}

\bib{schenck04}{article}{
   author={Schenck, Hal},
   title={Lattice polygons and Green's theorem},
   journal={Proc. Amer. Math. Soc.},
   volume={132},
   date={2004},
   number={12},
   pages={3509--3512},
   issn={0002-9939},
   review={\MR{2084071}},
   doi={10.1090/S0002-9939-04-07523-9},
}

\end{biblist}
\end{bibdiv}
\end{document}